\documentclass[reqno]{amsart}
\usepackage[normalem]{ulem}
\usepackage{hyperref} 
\usepackage{color}
\usepackage{latexsym}
\usepackage{graphicx}
\usepackage{mathrsfs}
\usepackage{amssymb}
\usepackage{amsfonts}
\usepackage{amssymb,amsmath,amsthm,tikz}
\newcommand{\vol}{{\text vol}}

\newcommand{\De}{\Delta}


\hypersetup{colorlinks = true, urlcolor = black}
\headheight=6.15pt 
\textheight=8.75in 
\textwidth=6.5in
\oddsidemargin=0in 
\evensidemargin=0in 
\topmargin=0in


\newcommand{\red}{\color{red}}

\newcommand{\edit}[1]{ {\red \cs #1 \cs}}
\newcommand{\cs}{$\clubsuit$}


\DeclareMathOperator{\Vol}{Vol}

\DeclareMathOperator{\Hilb}{Hilb}


\newcommand{\Sjostrand}{Sj\"ostrand }



\newcommand{\C}{\mathbb{C}}
\newcommand{\E}{\mathbb{E}}

\newcommand{\CP}{\mathbb{CP}}

\newcommand{\R}{\mathbb{R}}

\newcommand{\Z}{\mathbb{Z}}

\newcommand{\N}{\mathbb{N}}

\renewcommand{\P}{\mathbb{P}}

 

\newcommand{\acal}{\mathcal{A}}

\newcommand{\ccal}{\mathcal{C}}
\newcommand{\dcal}{\mathcal{D}}

\newcommand{\fcal}{\mathcal{F}}

\newcommand{\hcal}{\mathcal{H}}

\newcommand{\lcal}{\mathcal{L}}
\newcommand{\mcal}{\mathcal{M}}
\newcommand{\ncal}{\mathcal{N}}
\newcommand{\ocal}{\mathcal{O}}
\newcommand{\pcal}{\mathcal{P}}

\newcommand{\rcal}{\mathcal{R}}
\newcommand{\scal}{\mathcal{S}}

\newcommand{\wh}{\widehat}
\newcommand{\wt}{\widetilde}
\newcommand{\wb}{\overline}

\newcommand{\bma}{\begin{bmatrix}}
\newcommand{\ema}{\end{bmatrix}}
\newcommand{\baa}{\begin{align*}}
\newcommand{\eaa}{\end{align*}}
\newcommand{\bea}{\begin{eqnarray*} }
\newcommand{\eea}{\end{eqnarray*} }
\newcommand{\bee}{\begin{eqnarray} }
\newcommand{\eee}{\end{eqnarray} }
\newcommand{\be}{\begin{equation} }
\newcommand{\ee}{\end{equation} }
\newcommand{\bp}{\begin{prop}}
\newcommand{\ep}{\end{prop}}
\newcommand{\bt}{\begin{theorem}}
\newcommand{\et}{\end{theorem}}
\newcommand{\bpf}{\begin{proof}}
\newcommand{\epf}{\end{proof}}
\newcommand{\bl}{\begin{lem}}
\newcommand{\el}{\end{lem}}
\newcommand{\bc}{\begin{cor}}
\newcommand{\ec}{\end{cor}}
\newcommand{\bd}{\begin{defn}}
\newcommand{\ed}{\end{defn}}
\newcommand{\bcs}{\begin{cases}}
\newcommand{\ecs}{\end{cases}}
\newcommand{\bex}{\begin{example}}
\newcommand{\eex}{\end{example}}
\newcommand{\brem}{\begin{rem}}
\newcommand{\erem}{\end{rem}}


\newcommand{\pa}{\partial}
\newcommand{\ot}{\otimes}
\newcommand{\half}{\frac{1}{2}}
\renewcommand{\d}{\partial}
\newcommand{\dbar}{\bar\partial}
\newcommand{\ddbar}{\partial\dbar}
\newcommand{\RM}{\backslash}

\newcommand{\la}{\lambda}


\newcommand{\hPi}{\h \Pi}

\newcommand{\h}{\hat} 

\newcommand{\lan}{\langle}
\newcommand{\ran}{\rangle}


\def\XXint#1#2#3{{\setbox0=\hbox{$#1{#2#3}{\int}$ }
\vcenter{\hbox{$#2#3$ }}\kern-.6\wd0}}


\newtheorem{theo}{{\sc Theorem}}[section]
\newtheorem{cor}[theo]{{\sc Corollary}}

\newtheorem{lem}[theo]{{\sc Lemma}}
\newtheorem{prop}[theo]{{\sc Proposition}}
\newtheorem{defn}[theo]{{\sc Definition}}
\newtheorem{rem}{{\sc Remark}}
\newenvironment{example}{\medskip\noindent{\it Example:\/} }{\medskip}

\newcommand{\QQ}{\mathcal{Q}}

\newcommand{\T}{{\mathbf T}^m}

\newcommand{\Hess}{{\operatorname {Hess}}}

\newcommand{\szego}{Szeg\"o }

\newcommand{\kahler}{K\"ahler }
\newcommand{\Kahler}{K\"ahler }

\title{Central Limit theorem for toric \kahler manifolds}

\author{Steve Zelditch and Peng Zhou}
\address{Department of Mathematics, Northwestern  University, Evanston, IL 60208, USA}

\email{zelditch@math.northwestern.edu}

\thanks{Research partially supported by NSF grant  DMS-1541126
and by the Stefan Bergman trust  .}

\date{\today}

\begin{document}

\begin{abstract} Associated to the Bergman kernels of a polarized toric
\kahler manifold $(M, \omega, L, h)$  are  sequences of measures $\{\mu_k^z\}_{k=1}^{\infty}$
parametrized by the points $z \in M$. For each $z$ in the open orbit, we  prove a central limit theorem for $\mu_k^z$. The center of mass of  $\mu_k^z$ is the image
of $z$ under the moment map;  after re-centering at $0$ and
dilating by $\sqrt{k}$, the re-normalized measure tends to a centered Gaussian whose variance is the Hessian of the \kahler potential at $z$. We further give
a remainder estimate of Berry-Esseen type. The sequence $\{\mu_k^z\}$  is generally not a sequence of convolution powers and the proofs only involve \kahler analysis. 

\end{abstract}

\maketitle


\section{Introduction}

Let $(L, h,  M,\omega)$ be a polarized toric \kahler manifold with ample toric line bundle $L \to M$. Thus, there exists a Hamiltonian torus action $\Phi^{\vec t}(z): \T \times M \to M$ on $M$ which extends holomorphically to a $(\C^*)^m$ action, and 
$M $ is the closure of an open orbit $M^o = (\C^*)^m \{z_0\}$.  Let $h$ denote 
a $\T$-invariant
 Hermitian metric on  $L $ with curvature form $\omega$. The moment map 
\begin{equation} \label{MM} \mu_h:= \mu:  M \to P \subset \R^m, \end{equation}   associated to this data defines a torus bundle on the open orbit over a convex  lattice polytope  $P$ known as a  Delzant polytope. As reviewed in Section \ref{MONSECT}, there is a natural basis $\{s_{\alpha}\}_{\alpha \in k P}$  of the space $H^0(M, L^k)$ of holomorphic sections of the $k$-th power of $L$ by 
eigensections $s_{\alpha}$ of the $\T$ action. In a standard frame $e_L$ of $L$ over $M^o$, they correspond to monomials $z^{\alpha}$ on $(\C^*)^m$. For any $z \in M^o$ and $k \in \N$, we define the probability measure,
\begin{equation} \label{MUKZDEF} \mu_k^z = \frac{1}{\Pi_{h^{k }}(z,z)}\;\; \sum_{\alpha \in kP \cap \Z^m}
\frac{|s_{\alpha}(z)|_{h^{k }}^2}{\|s_{\alpha}\|_{h^{k }}^2}   \;
\delta_{\frac{\alpha}{k }} \in \mcal_1(\R^m),
\end{equation} 
on $\R^m$.   Here, $\|s_{\alpha}\|_{h^k }$ is the $L^2$ norm of
 $s_{\alpha}$ with respect to the natural inner product $\text{Hilb}_k(h)$ induced by the Hermitian metric on $H^0(M, L^k)$ and   $\Pi_{h^{k }}(z,z)$ is the contracted \szego kernel on the
diagonal (or density of states); see \S \ref{XhSECT}  for
background. The measures are  discrete measures supported on
  $P \cap \frac{1}{k} \Z^m$, and 
 were previously studied in \cite{SoZ10,SoZ12}. The main result of this article is that for each $z, $  the sequence $\{\mu_k^z\}_{k =1}^{\infty}$ satisfies a CLT (central limit theorem) with a Berry-Esseen type remainder estimate.

To state the results precisely we need to introduce some notation and background. A Gaussian measure on $\R^m$ with mean $\vec m$
and covariance matrix $\Sigma$ is a measure of the form,
$$\gamma_{\vec m, \Sigma} (\vec x): = (2 \pi \det \Sigma)^{-m/2} e^{- \half
\langle \Sigma^{-1} (\vec x - \vec m), (\vec x - \vec m) \rangle}. $$
Our aim is to prove that in the sense of weak convergence, dilations of \eqref{MUKZDEF} tend to a certain Gaussian measure,
\begin{equation}\frac{1}{\Pi_{h^{k }}(z,z)}\;\; \sum_{\alpha \in kP \cap \Z^m}
\frac{|s_{\alpha}(z)|_{h^{k }}^2}{\|s_{\alpha}\|_{h^{k }}^2}   \;
\delta_{\sqrt{k} (\frac{\alpha}{k} - \mu_h(z))} \to \gamma_{0, \text{Hess} \varphi (z)}, \end{equation}
whose mean is $0$ and whose covariance matrix is the Hessian
${\Hess} \varphi(z)$
of the \kahler potential. 
In Section \ref{BACKGROUND}, we review the fact that the \kahler potential $\varphi(z) $ of a toric variety
  is  a convex function of $(\rho_1, \cdots, \rho_m) =   (\log |z_1|^2, \cdots, \log |z_m|^2)$ on   $\R^m
$.
Here, we use orbit coordinates $(\rho, \theta)$ where $e^{\rho_i} = |z_i|^2$. Then  $\nabla_{\rho}\varphi$ is the gradient, resp.   ${\Hess} \; \varphi = \pa_{\rho_i \rho_j}^2 \varphi(e^{\rho/2})$ is the Hessian in the $\rho$ variables. We refer to Section \ref{BACKGROUNDOO}
for definitions and details.

\subsection{Mean and covariance}
To determine the appropriate Gaussian measure we need to determine  the
asymptotics as $k \to \infty$ of the mean, resp.  covariance matrix \begin{equation}
\vec m_k(z) = \int_P \vec x d\mu_k^z(x), \; \text{resp.} \; \;  [\Sigma_k]_{ij}(z) = \int_P (x_i - m_{k, i}(z))(x_j - m_{k, j}(z)) d\mu_k^z. \end{equation}

\begin{lem} \label{MVAR} Let $\mu_h: M \to P$ be the moment map \eqref{MM}. Then,
$$\vec m_k(z)  = \mu_h(z) + O(1/k), \;\;\; \Sigma_k(z) = \frac{1}{k} {\Hess} \;\varphi(z) + O(\frac{1}{k^2}) $$
\end{lem}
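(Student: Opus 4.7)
The proof should exploit the fact that in a toric frame on the open orbit $M^o$, $|s_\alpha(z)|^2_{h^k} = e^{\alpha\cdot\rho - k\varphi(\rho)}$ where $\rho_i = \log|z_i|^2$, so the monomial index $\alpha$ can be produced by differentiating in $\rho$. Explicitly,
\[
\partial_{\rho_i} |s_\alpha(z)|_{h^k}^2 \;=\; \bigl(\alpha_i - k\,\partial_{\rho_i}\varphi(z)\bigr)\,|s_\alpha(z)|_{h^k}^2 \;=\; \bigl(\alpha_i - k\,\mu_h(z)_i\bigr)\,|s_\alpha(z)|_{h^k}^2,
\]
since $\mu_h = \nabla_\rho\varphi$ under the orbit coordinates. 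The norms $\|s_\alpha\|_{h^k}^2$ are independent of $z$, so summation against them commutes with $\partial_\rho$.

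The plan for the mean is to write $\sum_\alpha \alpha_i |s_\alpha(z)|_{h^k}^2/\|s_\alpha\|_{h^k}^2 = \partial_{\rho_i}\Pi_{h^k}(z,z) + k\,\mu_h(z)_i\,\Pi_{h^k}(z,z)$, then divide by $k\,\Pi_{h^k}(z,z)$ to obtain
\[
\vec m_k(z)_i \;=\; \mu_h(z)_i \;+\; \frac{1}{k}\,\frac{\partial_{\rho_i}\Pi_{h^k}(z,z)}{\Pi_{h^k}(z,z)}.
\]
I would then quote the Tian-Yau-Zelditch / Bergman kernel expansion $\Pi_{h^k}(z,z)=(k/\pi)^m(1+a_1(z)/k+\cdots)$, which holds smoothly on $M^o$ and can be differentiated term by term, to conclude that $\partial_{\rho_i}\Pi_{h^k}/\Pi_{h^k}=O(1)$ uniformly on compact subsets of $M^o$. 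This yields $\vec m_k(z)=\mu_h(z)+O(1/k)$.

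For the covariance I would iterate the same trick: applying $\partial_{\rho_j}$ to the identity used for the mean produces
\[
\sum_\alpha \alpha_i\alpha_j\,\frac{|s_\alpha(z)|_{h^k}^2}{\|s_\alpha\|_{h^k}^2} \;=\; \partial^2_{\rho_i\rho_j}\Pi_{h^k} + k\,\mu_j\,\partial_{\rho_i}\Pi_{h^k} + k\,\mu_i\,\partial_{\rho_j}\Pi_{h^k} + k\bigl(\partial_{\rho_i\rho_j}^2\varphi\bigr)\Pi_{h^k} + k^2\mu_i\mu_j\,\Pi_{h^k},
\]
using $\partial_{\rho_j}\mu_i=(\mathrm{Hess}\,\varphi)_{ij}$. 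Dividing by $k^2\Pi_{h^k}$ and subtracting the product $m_{k,i}m_{k,j}$ computed above, the $\mu_i\mu_j$ and mixed $\mu\cdot\partial\Pi/\Pi$ terms cancel exactly, leaving
\[
[\Sigma_k]_{ij}(z) \;=\; \frac{1}{k}(\mathrm{Hess}\,\varphi)_{ij}(z) \;+\; \frac{1}{k^2}\!\left[\frac{\partial^2_{\rho_i\rho_j}\Pi_{h^k}}{\Pi_{h^k}} - \frac{\partial_{\rho_i}\Pi_{h^k}\,\partial_{\rho_j}\Pi_{h^k}}{\Pi_{h^k}^2}\right]\!(z,z).
\]
The bracket is again $O(1)$ by the smooth Bergman expansion, giving the claimed $O(1/k^2)$ remainder.

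The only real issue is justifying uniform control of the logarithmic derivatives of $\Pi_{h^k}$ on compact subsets of $M^o$; everything else is bookkeeping. This follows from the fact that the TYZ expansion is a smooth expansion, so derivatives of $\Pi_{h^k}$ obey analogous asymptotics with coefficients equal to the $\rho$-derivatives of the TYZ coefficients $a_j(z)$, and these remain bounded on compact subsets of the open orbit where $\varphi$ is smooth and strictly convex.
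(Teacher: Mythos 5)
Your proposal is correct and follows essentially the same route as the paper: differentiate the Bergman density $\Pi_{h^k}(\rho)=\sum_\alpha e^{\langle\alpha,\rho\rangle-k\varphi(\rho)}/Q_{h^k}(\alpha)$ in $\rho$ to bring down the factors $\alpha_i-k\partial_{\rho_i}\varphi$, and then invoke the (differentiable) Tian--Yau--Zelditch expansion to control the logarithmic derivatives of $\Pi_{h^k}$ on compact subsets of $M^o$. The only cosmetic difference is that the paper centers the second moment at $\mu_h(z)$ and then corrects by $(m_k-\mu_h)\otimes(m_k-\mu_h)=O(1/k^2)$, whereas you subtract $m_{k,i}m_{k,j}$ directly; the two computations are identical in substance.
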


\noindent  
 The proof  is reviewed in  Section \ref{MVSECT} (from \cite[Proposition 6.3]{Z09}). It implies the  law of large numbers for the sequence $\{\mu_k^z\}$: In the weak topology of measures on $C(\bar{P})$,
$\mu_k^z \to \delta_{\mu_h(z)}. $ 
We therefore center the measures   \eqref{MUKZDEF} at
$\mu(z)$, i.e. put
\begin{equation} \label{CENTERED} \tilde{\mu}_k^z = \mu_k^z (x - \mu_h(z)), \end{equation}
and then dilate by $\sqrt{k}$ to obtain the normalized sequence,

\begin{equation}
\label{DILATEmukz} D_{\sqrt{k}} \tilde{\mu}_k^z 
= \frac{1}{\Pi_{h^{k }}(z,z)}\;\; \sum_{\alpha \in k P \cap \Z^m}
\frac{|s_{\alpha}(z)|_{h^{k }}^2}{\|s_{\alpha}\|_{h^{k }}^2}   \;
\delta_{\sqrt{k} (\frac{\alpha}{k} - \mu_h(z))}. \end{equation} Equivalently, if $f \in C_b(\R^m)$. Then,
\begin{equation}\label{MEASURE} \langle f, D_{\sqrt{k}}\tilde{ \mu}_k^z \rangle
= \frac{1}{\Pi_{h^{k }}(z,z)}\;\; \sum_{\alpha \in kP \cap \Z^m}
\frac{|s_{\alpha}(z)|_{h^{k }}^2}{\|s_{\alpha}\|_{h^{k }}^2}   \;
f(\sqrt{k} (\frac{\alpha}{k}- \mu_h(z)),
\end{equation} 
Here, $C_b(\R^m)$ denotes the space
of bounded continuous functions on $\R^m$.

\subsection{Weak* convergence on $C_b(\R^m)$}
  Our first main result is the following

\begin{theo}  \label{CLT}  In the topology of weak* convergence on $C_b(\R^m)$, $$D_{\sqrt{k}} \wt \mu_k^z \stackrel{w*}{\rightarrow} \gamma_{0, {\Hess}\; \varphi (z)}.
 $$
That is, for any $f \in C_b(\R^m)$, 
$$\int_{\R^m} f(x) D_{\sqrt{k}}d \wt \mu_k^z  (x)\to \int_{\R^m} f(x) d \gamma_{0, {\Hess}\; \varphi (z)}(x). $$
 \end{theo}

The role of the parameter $z$ is similar to that of the parameter 
$p$ in the Bernoulli measures $\mu_p = p \delta_0 + (1-p) \delta_1$ and their convolution powers on the unit interval $[0,1]$. In very special cases, such as the Fubini-Study metric $h$ of
$M = \CP^m$, $\mu_k^z$ is itself a sequence of
dilated convolution powers, 
$\mu_k^z = (\mu_1^z)^{*k}  =  \mu_1^z  * \mu_1^z \cdots * \mu_1^z $ ($k$ times).  It has been pointed out in \cite{D08, STZ03} that  such a situation occurs for the Fubini-Study metric on
the dual hyperplane line bundle $\ocal(1) \to \CP^m$ and for the Bargmann-Fock case.   This is equivalent to 
the condition that $\Pi_{h^k} = (\Pi_{h^1})^k$. In \cite{STZ03} the relation between $\Pi_{h^k}$ and $(\Pi_{h^1})^k$ on a toric \kahler manifold  has been given in terms of partition functions of lattice random walks and a certain pseudo-differential operator. To the author's knowledge, conditions on $h$ which are necessary and/or sufficient that $\Pi_{h^k}= (\Pi_{h^1})^k$ are not known even for a toric variety. Donaldson points out that it holds for sequences of metrics defined
by Veronese embeddings. \footnote{What Donaldson calls the CLT in \cite[(9)]{D08} is a local limit law of the kind proved in \cite{SoZ07} (see Section \ref{LLSECT}). The Poisson limit law alluded to in \cite{D08} was proved in \cite{SoZ10,F12}.} The proof of Theorem \ref{CLT} does not appeal to any prior results on central limit theorems or probablility theory but is purely a result of toric \kahler analysis and is based on the quantum dynamics of the torus action (see \eqref{ucaldef}).

\subsection{Berry-Esseen type remainder}

The classical Berry-Esseen theorem  gives a quantitative remainder estimate
for the CLT for  sums $S_N = X_1 + \cdots + X_N$ of i.i.d. real-valued  random variables with finite
third moment.  With no loss of generality, assume that $\E X_j = 0, \text{Var}(X_j) = 1$ and let $m_3 = \E |X_j|^3. $  Let $\mu$ denote the common distribution of the $X_j$. Then the Berry-Esseen remainder bound states that 
if $f$ is a ``$\gamma_{0,1}$-continuous bounded function'' then
\begin{equation} \label{BE} \int_{\R^m} f(x) D_{\sqrt{k}} d\mu^{*k}(x)  = \int_{\R^m} f(x) d\gamma_{0,1}(x) + O(\frac{m_3}{\sqrt{k}}). \end{equation}
Such functions include characteristic functions of sets whose boundaries
have Lebesgue measure zero.
The Berry-Esseen bound was extended to the multivariate CLT by Bergstrom,  Bhattacharya,  Rotar, Sazonov and von Bahr  around 1970; see \cite{Bhat} for background and references.  
The measures $\mu_k^z$ of this article would be referred to as distributions
of lattice random variables
$\vec X_k$, i.e. random variables whose values are almost surely located
on lattice points of $\frac{1}{k} \Z^m \cap P$.  Special techniques are available for lattice random variables (see \cite[Chapter 5]{Bhat}) but we do not use them here.
The following is a simple analogue of the  remainder estimate of \cite{ZZ17b},
and is stated for certain continuous test functions  rather than for characteristic functions of sets.

\begin{theo}  \label{BE2} If $f \in C_0(\R^m)$,  with $\hat{f} \in L^1(\R^m)$ bounded by a radially decreasing $L^1$ function,  then  $$\int_{\R^m} f(x) D_{\sqrt{k}}d \wt \mu_k^z(x)= \int_{\R^m} f  (x) d\gamma_{0,{\Hess}\; \varphi (z)}(x) + O_f(\frac{1}{\sqrt{k}}). $$

 \end{theo}
 
 The analogous remainder estimate is proved for $S^1$ actions with scalar moment map in  \cite[Theorem 2]{ZZ16} and we generalize the periodization argument from that article.  

\subsection{Local Limit theorem}

In this section, we tie together some results of   \cite{SoZ10, SoZ12}    to Theorem \ref{CLT}. We show that the former  imply a ``local limit law'' for the dilated measures $\mu_k^{z,1}(p) : = \mu_k^z(p/k)$ for $p \in \R^m$.

Classically, a  local limit theorem for lattice random variables pertains
to a triangular array 
$\{X_{N,k}\}_{k=1}^N$ of independent random variables with 
values in $\Z^m$. Let $S_N = \sum_{k=1}^N X_{N,k}$.  Assuming that $\frac{S_N - \E S_N}{\sqrt{N}} \to \gamma_{0, \Sigma}$ in the weak-* sense,  the  local limit theorem
states that
$${\mathbb P}_N(\alpha) : = {\mathbb P}\{S_N = \alpha\} = N^{-m/2} \gamma_{0, \sigma^2}(\frac{\alpha - \E S_N}{\sqrt{N}}) + o(N^{-m/2}). $$
For instance, in the model case of  Bernoulli random variables with $\P(X_1 = 1) = \half =
\P(X_1 = 0)$,  
$\P(S_N = k) = {N \choose k} 2^{-N}$ and  $$P(S_N = k) \simeq \frac{\sqrt{2}}{\sqrt{\pi N}} e^{- \frac{(k - N/2)^2}{N/2}}. $$
We refer to \cite[Chapter 9]{GK} and \cite{Muk91} for discussion of local limit theorems
for lattice distributions.

These results do not apply to the measures \eqref{MUKZDEF}. However, we  prove that  they  satisfy the following  local limit theorem:
\begin{theo}\label{LLT}
 for $\alpha \in \Z^m$,
\begin{equation} \label{LLL} \mu_k^{z,1}(\alpha) = k^{-m/2} \gamma_{0, \Hess \varphi(z)} (\sqrt{k} \left(\frac{\alpha}{k} - \mu_h(z)\right)) (1+ O(1/k)). \end{equation}
\end{theo}

All of the necessary calculations and esimtates were proved in \cite{SoZ10,SoZ12}, but the conclusion was not drawn there.

  In the classical case of independent lattice variables, such as the de Moivre-Laplace theorem,  the CLT can be
derived from the local limit law by integrating (i.e. summing) the latter.
The localization formulae in \cite{SoZ10} could probably be used to prove Theorem \ref{CLT} from Theorem \ref{LLT} in  this way. But the proof we give of Theorem \ref{CLT} seems simpler as well as giving a sharper remainder estimate.

\subsection{\label{RR}Related results}

  Theorem \ref{CLT}  some resemblence in both its statement and proof to  the CLT proved in \cite{ZZ17b} for Hamiltonian flows and in \cite{ZZ16} for $S^1$ actions.  See also \cite{PS,RS} for prior articles with related results. But these  articles involve sequences of probability measures on $\R$, while the CLT in this article is about the sequence $\mu_k^z$ of probability measures on $P \subset \R^m$. Moreover, those articles gave Erf asymptotics
for scaled partial Bergman kernels around the interface $\partial \acal$  in $M$  between an allowed region $\acal$ and its complement. This article gives a  vector-valued refinement of the CLT of \cite{ZZ16} in which 
$\mu_h^{-1}(z)$ is a single torus instead of a hypersurface $\partial \acal$, and  Gaussian asymptotics hold in all normal directions to the torus.

In Theorem \ref{CLT}, we  assume that $z \in P^o$, the interior of $P$, and show that  
the limit  is uniform on compact
subsets of $M^o$.  If we allow varying points $z_k \to \partial P$,  then as in the model binomial case,
the measures $\mu_k^{z_k}$ tend to some kind of  Poisson limit law. Results of this kind are  proved in \cite{SoZ10,F12} in the toric setting. It would be interesting to investigate such Poisson limit laws on  general \kahler manifolds and for general Hamiltonian, where  $\dcal$ is replaced by the set of critical points of $H$. Critical levels were excluded in \cite{ZZ16,ZZ17b}.

 An intriguing question is whether Theorem \ref{CLT} admits a generalization to non-toric \kahler manifolds. One possibility is to try  to adapt it to the other
 \kahler manifolds of large symmetry discussed in \cite{D08}. Another is to try to define analogues of $\mu_k^z$ on Okounkov bodies of polarized \kahler manifolds. In the latter case, even the law of large numbers does not seem to have been formulated.

\section{\label{BACKGROUND} Background on toric varieties}

We employ the same notation and terminology as in
\cite{Z09,SoZ10, SoZ12}. We recall that a toric \kahler manifold is a \kahler manifold
$(M, J, \omega)$ on which the complex torus $(\C^*)^m$ acts
holomorphically with an open orbit $M^o$.  We
 choose a basepoint $z_0$ on the orbit open and
identify  $M^o \equiv (\C^{*})^{m} \{z_0\}$. The underlying real torus is
denoted $\T$ so that $(\C^*)^m = \T \times \R_+^m$, which we write
in coordinates as $z = e^{\rho/2 + i \theta}$ in a multi-index
notation.

We assume that $M$ is a smooth projective toric \kahler manifold, hence that  $P$ is a Delzant
polytope, i.e. that  $P$ is
 defined by a set of linear inequalities
$$l_r(x): =\langle x, v_r\rangle-\alpha_r \geq 0, ~~~r=1, ..., d, $$
where $v_r$ is a primitive element of the lattice and
inward-pointing normal to the $r$-th $(n-1)$-dimensional face of
$P$. We denote by $P^o$ the interior of $P$ and by $\partial P$
its boundary; $P = P^o \cup
\partial P$.

\subsection{\label{MONSECT}Monomial basis of $H^0(M, L^k)$, norms and \szego kernels}

 A natural basis of the
space of holomorphic sections $H^0(M, L^k)$ associated to the
$k$th power of $L \to M$ is defined by the  monomials $z^{\alpha}$
where $\alpha$ is a lattice point in the $k$th dilate of the
polytope, $\alpha \in k P \cap \Z^m.$ That is, there exists an
invariant frame $e_L$ over the open orbit so that $s_{\alpha}(z) =
z^{\alpha} e_L$.
We equip $L$ with a toric Hermitian metric $h$ whose
curvature $(1,1)$-form $\omega = i \ddbar \log \|e\|_{h}^2$ is positive.  We often express the norm in terms of a local
\kahler potential, $\|e\|_{h}^2 = e^{- \varphi}$, so that
$|s_{\alpha}(z)|_{h^k}^2 = |z^{\alpha}|^2 e^{- k \varphi (z)}$ for
$s_{\alpha} \in H^0(M, L^k)$.

Any  hermitian metric $h$ on $L$ induces inner products
$\Hilb_k(h)$ on $H^0(M, L^k)$, defined by
\begin{equation} \label{HILB} \langle s_1, s_2 \rangle_{\text{Hilb}_k(h)} =
\int_M (s_1(z), s_2(z))_{h^k} \frac{\omega_h^m}{m!}.
\end{equation} The monomials are orthogonal with respect to any
such toric inner product and have the norm-squares
\begin{equation} \label{QFORM} Q_{h^k}(\alpha) = \int_{\C^m} |z^{\alpha}|^2 e^{-
k \varphi(z)} dV_{\varphi}(z), \end{equation} where $dV_{\varphi} = (i
\ddbar \varphi)^m/ m!$.  We denote the dimension of $H^0(M, L^k)$ by $N_k$.

\subsection{\label{BACKGROUNDOO} \kahler potential, moment map  and sympletic potential} 
Recall that we use log coordinate $(\rho, \theta)$ on $M^o \cong (\C^*)^m$ by setting $z_i = e^{\rho_i/2 + \sqrt{-1} \theta_i}$. 
Since the Kahler potential $\varphi$ is $\T$-invariant, $\varphi(z)$ only depends on the $\rho$ variables, hence we may write it as $\varphi(\rho)$.

The moment map $\mu_h$ is defined as the gradient of the \kahler potential $\varphi:\R^m \to \R$. Let $\R^m_p$ be the dual space of $\R^m_\rho$, where we use coordinates $p=(p_1, \cdots, p_m)$ and $\rho=(\rho_1, \cdots, \rho_m)$ respectively. The gradient map induced by $\varphi$ is defined by 
\[ \Phi_\varphi: \R^m_\rho \to \R^m_p, \quad \rho \mapsto p(\rho) := (\pa_{\rho_1} \varphi, \cdots, \pa_{\rho_m} \varphi). \]
The moment map is then defined by,
\begin{equation}  \mu_h(z) = \Phi_\varphi(\rho). \label{symp-pot} \end{equation}

The  moment map $\mu_h: M \to \R^m$ is only well-defined up to an additive constant.  The  equivariant toric line bundle $L$ fixes this degree of freedom 
as follows:
Let $I_k \subset \Z^m$ be the subset consisting of weight $H^0(M, L^k)$ under the action of $(\C^*)^m$, and let $P_k$ be the convex hull of $I_k$. Then  $P_k = k P'$ for a fixed convex polytope $P'$. We normalize $\mu_h$ by requiring that $\mu_h(M) = P'$.  
For background, see \cite{Fu}.
%
%
%
%

\subsection{\label{XhSECT} The \szego kernel and the Bergman kernel}

 The \szego (or Bergman) kernels of a positive Hermitian line
bundle $(L, h) \to (M, \omega)$  are the kernels of the orthogonal
projections $\Pi_{h^k}: L^2(M, L^k) \to H^0(M, L^k)$ onto the
spaces of holomorphic sections with respect to the inner product
$\Hilb_k(h)$,
\begin{equation}\label{Pik} \Pi_{h^k} s(z) = \int_M \Pi_{h^k}(z,w) \cdot s(w)
\frac{\omega_h^m}{m!}, \end{equation} where the $\cdot$ denotes
the $h$-hermitian inner product at $w$.
 In terms of a local frame  $e$  for $L \to M$ over an
open set $U \subset M$,  we may write sections as $s = f e$. If
$\{s^k_j=f_j e_L^{\otimes k}:j=1,\dots,N_k\}$ is  an orthonormal
basis for $H^0(M,L^k)$, then  the \szego kernel can be written in
the form
\begin{equation}\label{szego}  \Pi_{h^k}(z, w): = F_{h^k}
(z, w)\,e_L^{\otimes k}(z) \otimes\overline {e_L^{\otimes
k}(w)}\,,\end{equation} where
\begin{equation}\label{FN}F_{h^k}(z, w)=
\sum_{j=1}^{N_k}f_j(z) \overline{f_j(w)}\;, ~~~N_k = \dim H^0(M, L^k).\end{equation}
We also introduce the local kernel $B_k(z,w)$, defined with respect to the unitary frame: 
\begin{equation} \label{SzK} 
\Pi_{h^k}(z,w) = B_k(z,w) \cdot \frac{e_L^k(z)}{\|e^k_L(z)\|_h} \ot \overline{\frac{e_L^k(w)}{\|e^k_L(w)\|_h}} 
\end{equation}
The  {\it density of states} $\Pi_{h^k}(z)$  is the contraction of $\Pi_{k}(z,w)$ with the hermitian metric on the diagonal, 
\[\Pi_{h^k}(z): =   \sum_{i=0}^{N_k}
\|s^k_i(z)\|_{h_k}^2= F_{h^k}
(z, z)\,\left |e(z) \right |^{2k}_{h} =\ B_k(z,z), \]
where in the first equality we record a standard abuse of notation in which the diagonal of the
\szego kernel is identified with its contraction. 
On the diagonal, we have the following asymptotic expansion the density of states,
\begin{equation} \label{TYZ}  \Pi_{h^k}(z) =  k^m + c_1 S(z) k^{m-1} + a_2(z) k^{m-2} +
\dots \end{equation} where  $S(z)$ is the
scalar curvature of $\omega$.

 \subsection{Bergman kernels for a toric variety} In the
case of a toric variety, we have
\begin{equation}\label{FNa}F_{h^k}(z, w)=
\sum_{\alpha \in k P \cap \Z^m}  \frac{z^{\alpha}
\bar{w}^{\alpha}}{Q_{h^k}(\alpha)} \;,\end{equation}   where $Q_{h^k}(\alpha)$ is defined in \eqref{QFORM}. If we sift out
the $\alpha$th term of $\Pi_{h^k}$  by means of Fourier analysis
on $\T$, we obtain
\begin{equation} \label{PHK} \pcal_{h^k}(\alpha, z): =
\frac{|z^{\alpha}|^2 e^{- k \varphi(z)}}{Q_{h^k}(\alpha)}.
\end{equation}

Let $\wt \varphi(z,w)$ denote the almost extension of $\varphi(z)$ from the diagonal, that is $\wt \varphi$ satisfies the  condition $\dbar^k_z\wt   \varphi(z,w)|_{z=w} = \d^k_w \wt \varphi(z,w)|_{z=w} = 0$ for all $k \in \N$ and $\wt \varphi(z,w)|_{z=w} = \varphi(z)$. The $\T$ action is by holomorphic
isometries of $(M, \omega)$ and therefore \begin{equation} \label{EQUI} \wt \varphi(\Phi^{\vec t} z,\Phi^{\vec t} w) = \wt \varphi(z,w).  \end{equation}

The \szego kernel \eqref{SzK} admits a parametrix with complex phase
$\wt \varphi$ (see e.g. \cite{BBSj}). In the case of a toric \kahler manifold,
it takes the following simple form \cite{STZ03}.

\begin{prop}\label{SZKTV}  For any hermitian  toric positive line bundle over a toric
variety, the \szego kernel for the metrics $h_{\varphi}^N$ have the
asymptotic expansions in a local frame on $M$,
$$B_{h^k}(z, w) \sim e^{k \left(\wt \varphi(z, w) - \frac{1}{2} (\varphi(z)
+ \varphi(w)) \right) } A_k(z,w) \;\; \mbox{mod} \; k^{- \infty},
$$ where $A_k(z,w) \sim k^m \left(1 + \frac{a_1(z,w)}{k} + \cdots\right) $ is a semi-classical symbol of order $m$ and where the phase satisfies
\eqref{EQUI}. \end{prop}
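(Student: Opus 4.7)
The plan is a two-step Laplace/stationary-phase analysis applied to the explicit series \eqref{FNa}. Since $B_{h^k}(z,w)=F_{h^k}(z,w)\,e^{-k(\varphi(z)+\varphi(w))/2}$, it suffices to establish
\[
F_{h^k}(z,w) \sim e^{k\wt\varphi(z,w)}\,A_k(z,w) \ \ \mathrm{mod}\ k^{-\infty}
\]
with $A_k$ a semi-classical symbol of order $m$, where $\wt\varphi$ is the almost-analytic extension of $\varphi$. In the toric case, with $z=e^{\rho_z/2+i\theta_z}$ and $w=e^{\rho_w/2+i\theta_w}$, this extension takes the explicit form $\wt\varphi(z,w)=\varphi\bigl(\tfrac{\rho_z+\rho_w}{2}+i(\theta_z-\theta_w)\bigr)$, with $\varphi$ continued (almost-)analytically from $\R^m_\rho$ to a tube in $\C^m$; the equivariance \eqref{EQUI} is immediate from $\T$-invariance of $\varphi$.

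First, I would analyze $Q_{h^k}(\alpha)$. In log coordinates, using $dV_\varphi=\det(\Hess_\rho\varphi)\,d\rho\,d\theta$ and integrating out $\theta\in\T$, formula \eqref{QFORM} becomes
\[
Q_{h^k}(\alpha) = (2\pi)^m\int_{\R^m} e^{-k(\varphi(\rho)-\langle \alpha/k,\rho\rangle)}\det(\Hess_\rho\varphi)(\rho)\,d\rho.
\]
By strict convexity of $\varphi$, for $p=\alpha/k$ in a compact subset of $P^o$ the phase $\varphi(\rho)-\langle p,\rho\rangle$ has a unique nondegenerate minimum at $\rho^\ast(p)=\mu_h^{-1}(p)$, with critical value $-u(p)$, where $u$ is the symplectic potential (Legendre dual of $\varphi$). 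Laplace's method then provides a full asymptotic expansion
\[
Q_{h^k}(\alpha) = k^{-m/2}\,e^{k u(p)}\bigl(b_0(p)+k^{-1}b_1(p)+\cdots\bigr),\qquad p=\alpha/k,
\]
with $b_0$ an explicit multiple of $\det(\Hess\varphi(\rho^\ast))^{1/2}$.

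Next, I would convert the lattice sum \eqref{FNa} into an integral. For $z,w$ in a compact subset of $M^o$ the summand decays rapidly as $p=\alpha/k$ approaches $\partial P$, so Poisson summation (equivalently Euler-Maclaurin) yields
\[
F_{h^k}(z,w) = k^m\!\int_{P^o}\frac{e^{k\langle p,\,\log(z\bar w)\rangle}}{Q_{h^k}(kp)}\,dp+O(k^{-\infty}),
\]
where $\log(z\bar w)=\tfrac{\rho_z+\rho_w}{2}+i(\theta_z-\theta_w)$. Inserting the Laplace expansion for $Q_{h^k}$ turns this into an oscillatory integral with complex phase
\[
\Phi(p;z,w) = \langle p,\log(z\bar w)\rangle - u(p)
\]
and amplitude of order $k^{m/2}$, giving an overall prefactor $k^{3m/2}$.

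Finally, I would apply complex stationary phase in $p$. The critical equation $\nabla_p\Phi=\log(z\bar w)-\nabla u(p)=0$ has, by Legendre duality $\nabla u=(\nabla\varphi)^{-1}$, the unique complex solution $p^\ast(z,w)=\nabla_\rho\varphi(\log(z\bar w))$; at this point $\Phi=\varphi(\log(z\bar w))=\wt\varphi(z,w)$. Gaussian integration contributes $k^{-m/2}$, cancelling against $k^{3m/2}$ to give the claimed $k^m$ leading order, while the usual stationary-phase coefficients generate the full symbolic expansion $A_k\sim k^m(1+a_1/k+\cdots)$. The principal obstacle is the complex critical point: it requires an almost-analytic extension of $u$ off $P^o$ and a deformation of the $p$-contour into $\C^m$, with the resulting $\bar\partial$-remainder handled by the Melin-Sj\"ostrand technique underlying the general parametrix of \cite{BBSj}; in the toric setting the explicit duality $\nabla u=(\nabla\varphi)^{-1}$ makes the deformation transparent.
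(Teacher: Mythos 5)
The paper does not prove this proposition; it is quoted from \cite{STZ03} (with \cite{BBSj} for the general complex-phase parametrix), so there is no in-paper argument to compare against. Your reconstruction is essentially the lattice-sum argument of \cite{STZ03}: Laplace asymptotics for $Q_{h^k}(\alpha)$ (this is exactly \eqref{QQ}, i.e.\ \cite[Proposition 3.1]{SoZ07}), Poisson summation to pass from the sum over $kP\cap\Z^m$ to an integral, and complex stationary phase with the Legendre duality $\nabla u = (\nabla\varphi)^{-1}$ identifying the critical value with $\wt\varphi(z,w)=\varphi\bigl(\tfrac{\rho_z+\rho_w}{2}+i(\theta_z-\theta_w)\bigr)$; your bookkeeping of the powers of $k$ and of $\det\Hess\varphi$ is consistent with \eqref{TYZ}. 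The two places you flag as delicate are indeed where the cited references do the real work: the nonzero Poisson frequencies are killed by integration by parts using the nonvanishing imaginary part $\theta_z-\theta_w-2\pi\ell$ of the phase gradient (together with the exponential suppression of $p$ near $\partial P$, where the interior Laplace asymptotics of $Q_{h^k}$ degenerate and must be replaced by crude bounds), and the complex contour deformation to $p^\ast=\nabla\varphi(\log(z\bar w))$ requires the Melin--Sj\"ostrand almost-analytic machinery unless $\varphi$ is real-analytic. Since the paper only ever applies the proposition for $w=\Phi^{\vec t/\sqrt{k}}z$ close to $z$ and for $z$ in compact subsets of $M^o$, these restrictions are harmless for its purposes.
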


\subsection{\label{MVSECT} Proof of Lemma \ref{MVAR}}

As mentioned above, Lemma \ref{MVAR} was proved in \cite{Z09,SoZ10}. We briefly review the proof as preparation for the proof of Theorem \ref{CLT}.

\begin{prop} \label{COMPAREPIT2} Let $(M, L, h, \omega)$ be a polarized toric Hermitian line bundle. Then the means, resp. variances of $\mu_k^z$ are given respectively by,

\begin{enumerate}

\item $m_k(z)=   \mu_h(z) + O(k^{-1});$

\item $\Sigma_k(z) =  k^{-1}  {\Hess}\; \varphi +
 O(k^{-2}) $.

\end{enumerate}
\end{prop}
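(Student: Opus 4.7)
The plan is to exploit the identity that the $\T$-action gives, namely that each monomial satisfies $|z^\alpha|^2 = e^{\langle \alpha, \rho\rangle}$, and therefore the weighted measure \eqref{PHK} transforms under $\partial_{\rho_j}$ according to
\begin{equation}
\partial_{\rho_j} \pcal_{h^k}(\alpha, z) = (\alpha_j - k \partial_{\rho_j} \varphi)\, \pcal_{h^k}(\alpha, z).
\end{equation}
This is the analogue of ``differentiating a moment generating function brings down the moments.'' Summing over $\alpha$ and using $\sum_\alpha \pcal_{h^k}(\alpha, z) = \Pi_{h^k}(z,z)$ together with $\partial_{\rho_j} \varphi = [\mu_h(z)]_j$, we obtain
\begin{equation}
\sum_{\alpha} \alpha_j\, \pcal_{h^k}(\alpha, z) = \partial_{\rho_j} \Pi_{h^k}(z,z) + k\, [\mu_h(z)]_j \,\Pi_{h^k}(z,z).
\end{equation}
Dividing by $k\, \Pi_{h^k}(z,z)$ yields the clean identity $[m_k(z)]_j = [\mu_h(z)]_j + \tfrac{1}{k}\, \partial_{\rho_j} \log \Pi_{h^k}(z,z)$, which reduces part (1) to controlling a single logarithmic derivative of the density of states.

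For part (2) I would apply the operator $\partial_{\rho_i}\partial_{\rho_j}$ to $\pcal_{h^k}$ and sum, which yields
\begin{equation}
\partial_{\rho_i \rho_j}^2 \Pi_{h^k}(z,z) = \sum_{\alpha} \bigl[(\alpha_i - k \partial_{\rho_i}\varphi)(\alpha_j - k \partial_{\rho_j}\varphi) - k\, \partial_{\rho_i \rho_j}^2 \varphi \bigr]\pcal_{h^k}(\alpha, z).
\end{equation}
Dividing by $k^2\, \Pi_{h^k}(z,z)$ rearranges this into
\begin{equation}
\sum_{\alpha}\Bigl(\tfrac{\alpha_i}{k} - [\mu_h]_i\Bigr)\Bigl(\tfrac{\alpha_j}{k} - [\mu_h]_j\Bigr)\frac{\pcal_{h^k}(\alpha,z)}{\Pi_{h^k}(z,z)} = \tfrac{1}{k}[\Hess \varphi]_{ij} + \tfrac{1}{k^2}\, \frac{\partial_{\rho_i \rho_j}^2 \Pi_{h^k}(z,z)}{\Pi_{h^k}(z,z)}.
\end{equation}
This is the second moment around $\mu_h(z)$. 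To convert to the covariance $[\Sigma_k]_{ij}$ (centered at $m_k(z)$), subtract $(m_{k,i}-\mu_{h,i})(m_{k,j}-\mu_{h,j})$, which by part (1) is $O(k^{-2}) \cdot O(k^{-2})$ and hence negligible.

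Both conclusions then follow once we control the logarithmic derivatives of $\Pi_{h^k}(z,z)$. This is where the TYZ expansion \eqref{TYZ} enters: since $\Pi_{h^k}(z,z) = k^m\bigl(1 + c_1 S(z) k^{-1} + a_2(z) k^{-2} + \cdots\bigr)$, we have $\log \Pi_{h^k}(z,z) = m \log k + O(k^{-1})$ with the remainder smooth in $z$. Provided the TYZ expansion can be differentiated term by term in $\rho$ (which is the main technical input one needs to invoke, e.g.\ from the standard parametrix in Proposition \ref{SZKTV} for the off-diagonal kernel), we conclude $\partial_{\rho_j} \log \Pi_{h^k}(z,z) = O(k^{-1})$ and $\partial_{\rho_i \rho_j}^2 \log \Pi_{h^k}(z,z) = O(k^{-1})$ uniformly on compact subsets of $M^o$. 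Plugging in gives $m_k(z) = \mu_h(z) + O(k^{-2})$ and $\Sigma_k(z) = \tfrac{1}{k} \Hess \varphi + O(k^{-3})$, both stronger than the advertised bounds.

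The main obstacle is the differentiability of the TYZ expansion in the $\rho$ variables uniformly on compact sets of the open orbit. Everything else is essentially formal: the identity $\partial_{\rho_j} \pcal_{h^k} = (\alpha_j - k\partial_{\rho_j}\varphi)\pcal_{h^k}$ is an algebraic consequence of the monomial structure, and the rearrangement into moments is bookkeeping. Thus the proof really reduces to one known analytic fact (TYZ with smooth parameter dependence) and one elementary observation about how derivatives in $\rho$ interact with weights of the torus action; this same mechanism will reappear, in a more elaborate form, in the proof of Theorem \ref{CLT} via the quantum dynamics of the $\T$-action.
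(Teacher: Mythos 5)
Your proposal is correct and follows essentially the same route as the paper: differentiate $\Pi_{h^k}(\rho)=\sum_\alpha e^{\langle\alpha,\rho\rangle-k\varphi(\rho)}/Q_k(\alpha)$ once and twice in $\rho$ to generate the first and second moments, then control the resulting logarithmic derivatives of the density of states via the (term-by-term differentiable) TYZ expansion. Your version is in fact slightly more careful than the paper's, making explicit both the reduction to $\partial_\rho\log\Pi_{h^k}=O(k^{-1})$ and the passage from the second moment about $\mu_h(z)$ to the covariance about $m_k(z)$, and thereby obtaining the sharper remainders $O(k^{-2})$ and $O(k^{-3})$.
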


\begin{proof} We briefly review the proof. 
Recall that the Bergman density function $\Pi_{h^k}(z)$ is $\T$-invariant, hence is a function of $\rho$, and can be written as 
\[ \Pi_{h^k}(\rho) = \sum_{\alpha \in k P \cap \Z^m} \Pi_{h^k, \alpha}(\rho) = \sum_{\alpha \in k P \cap \Z^m}  \frac{e^{\lan \alpha, \rho \ran - k \varphi(\rho)}}{Q_k(\alpha)},\]
Thus by explicit calculation we have
\bea
k^{-1} \pa_{\rho_j} \Pi_{h^k}(\rho) &=& \sum_{\alpha \in k P \cap \Z^m} \left(\frac{\alpha_j }{k} - \pa_{\rho_j} \varphi(\rho) \right)  \frac{e^{\lan \alpha, \rho \ran - k \varphi(\rho)}}{Q_k(\alpha)} \\
&=& \Pi_{h^k}(\rho) ( m_k(z) - \mu_h(z))_j
\eea
where we used $\pa_{\rho_j} \varphi(\rho) = \mu_h(z)_j$. 
Using the asymptotic expansion for $\Pi_{h^k}(\rho)$, we get $m_k(z) = \mu_h(z) + O(1/k)$. 

Then for the variance, we use
\bea
k^{-2} \pa^2_{\rho_i \rho_j} \Pi_{h^k}(\rho) &=& \sum_{\alpha \in k P \cap \Z^m} \left(\frac{\alpha_i}{k} - \pa_{\rho_i} \varphi(\rho) \right)\left(\frac{\alpha_j }{k} - \pa_{\rho_j} \varphi(\rho) \right) \frac{e^{\lan \alpha, \rho \ran - \mu_h(z)}}{Q_k(\alpha)} \\
&& - \sum_{\alpha \in k P \cap \Z^m} \left(  \frac{1}{k} \pa^2_{\rho_i \rho_j } \varphi(\rho) \right)  \frac{e^{\lan \alpha, \rho \ran - k \varphi(\rho)}}{Q_k(\alpha)}
\eea
Then divide by $\Pi_{k}(\rho)$ and use $m_k(z) = \mu_h(z) + O(1/k)$, we get the desired result for variance. 
%
%
%
%
%
%
%
%
%
%
%
%

\end{proof}

\section{\label{QDSECT} Quantum dynamics: Proof of Theorem \ref{CLT} }

The proof is somewhat similar to that of \cite[Theorem 4]{ZZ16} but in fact simpler because of the extra degrees of symmetry of a toric variety. 
A key simplifying feature   is that, like the $S^1$ action of \cite{ZZ16}, 
$\T$ acts holomorphically on $M$. As above, denote the action by  \begin{equation} (e^{i \vec t}, z) \in   \T \times M \mapsto e^{i \vec t} \cdot z =: \Phi^{\vec t} (z)  \end{equation}
and denote the infinitesimal generators of the action by
$\frac{\partial}{\partial \theta_j}$.
As discussed in \cite{STZ03,SoZ10,SoZ12}, the torus action can be quantized as a sequence of unitary operators $U_k(\vec t)$ on $H^0(M, L^k)$, or more precisely as a semi-classical Toeplitz Fourier integral opertator. We briefly review the key ideas and refer to the articles above for details and further background.

Let  $X_h = \partial D^*_h$ where $D_h^*$ is the unit co-disc bundle in $L^*$ with respect to $h$ and let
$\hcal^2(X_h)
\subset \lcal^2(X_h)$ denote the Hardy space of $L^2$ Cauchy-Riemann functions on $X_h$. It  is an $S^1$ bundle $\pi: X_h \to M$ and carries the $S^1$ action $r_{\theta}: S^1 \times X_h \to X_h$ by rotation of the fibers.
The $S^1$ action on $X_h$ commutes
with $\bar{\partial}_b$; hence $\hcal^2(X) = \bigoplus_{k
=0}^{\infty} \hcal^2_k(X)$ where $\hcal^2_k(X) =
\{ F \in \hcal^2(X): F(r_{\theta}x)
= e^{i
k \theta} F(x) \}$. Section $s_k$ of $L^k$ lift to equivariant
functions
$\hat{s}_l$ on $L^*$ by the rule
$$\hat{s}_k(\lambda) = \left( \lambda^{\otimes k}, s_k(z)
\right)\,,\quad
\la\in L^*_z\,,\ z\in M\,,$$
where $\lambda^{\otimes k} = \lambda \otimes
\cdots\otimes
\lambda$. We henceforth
restrict
$\hat{s}$ to $X$ and then the equivariance property takes the form
$\hat s_k(r_\theta x) = e^{ik\theta} \hat s_k(x)$. The map $s\mapsto
\hat{s}$ is a unitary equivalence between $H^0(M, L^{ k})$ and
$\hcal^2_k(X)$.

 There is a natural contact 1-form
$\alpha$ on $X_h$ defined by the  Hermitian connection 1-form, which
satisfies $d \alpha = \pi^* \omega$. 
The   $\T$ action lifts to $X_h$ as an action of the torus by 
contact transformations.  The generators $\frac{\partial}{\partial \theta_j}$ of the
$\T$ action on $M$ lift to contact vector fields
$\Xi_1,\dots\Xi_m$ on $X$. The horizontal lifts of the
Hamilton vector fields $\xi_j$ are then defined by
$$\pi_* \xi^h_{j} = \xi_j,\;\;\; \alpha(\xi^h_j) = 0,$$
and the contact vector fields $\Xi_j$ are given by:
\begin{equation} \label{Xij} \Xi_j = \xi^h_j + 2 \pi i \langle
\mu \circ\pi, \xi_j^* \rangle \frac{\partial}{\partial \theta}
=\xi^h_j + 2\pi i (\mu \circ\pi)_j\, \frac{\partial}{\partial
\theta},
\end{equation}
where $\mu$ is the moment map.
These vector fields act as differential operators
$ \hat{I}_j:\hcal^2_N(X) \to
\hcal^2_N(X)$ satisfying
\begin{equation}\label{Xi}(\hat{I}_j \hat S)
(\zeta)= \frac{1}{i} \frac{\d}{\d\varphi_j}  \hat
S(e^{i\varphi}\cdot\zeta)|_{\varphi=0}\;,\quad \hat S\in
\ccal^\infty(X)\;.
\end{equation}
Furthermore, the generator of the $S^1$ action acts on these
spaces and
\begin{equation}\label{dtheta} \hat{I}_{m +1} :\hcal^2_k(X) \to
\hcal^2_k(X)\;,\qquad \frac{1}{i}\frac{\d}{\d\theta}\hat s_k = k
\hat s_k \quad \mbox{for }\ \hat s_N\in
\hcal^2_k(X)\;.\end{equation}

The monomial sections $s_{\alpha}$  (equal to
$z^{\alpha}$ on the open orbit)  lift to $\T \times S^1$ equivariant functions
$\hat{s}_{\alpha}$ on $X_h$, i.e. 
as  joint
eigenfunctions of the $(m + 1)$ commuting operators $\hat{I}_j$.

The lifts   $\hat{\Pi}_k(x,y) $ of the  \szego kernels \eqref{Pik} are the (Schwarz) kernel of the orthogonal projection
$\hat{\Pi}_k : \lcal^2(X)\rightarrow
\hcal^2_k(X)$. They are  Fourier components,
\begin{equation} \label{hatPik}  \hPi_{h^k}(x,y) = \int_0^{2\pi} e^{-i k \theta} \hPi(r_{\theta} x, y) \frac{d\theta}{2\pi}, \end{equation}
   of the full  \szego projector  $\hPi(x,y)$.

The quantum torus action is defined by  $$U_{h^k}(\vec t): = e^{\sum_{j=1}^m t_j \hat{I}_j} = \prod_{j=1}^m e^{i t_j \hat{I}_j } $$
on $\hcal_k^2(X_h)$.   
Since the torus acts holomorphically, it is simply given by
\begin{equation} \label{UHATFORM} \h U_k(\vec t, x, y) =  \hat{\Pi}_{h^k} \Phi^{\vec t} \hat{ \Pi}_{h_k}(x,y)=    \hat{\Pi}_{h_k}( x, \Phi^{\vec t} y). \end{equation} We are most interested in the diagonal $ \h U_k(\vec t, x, x)$. It is $S^1$-invariant and depends only on $z = \pi(x)$,
so we denote it by
\begin{equation} \label{ucaldef} U_k(\vec t, z, z): = B_{h^k}( z, \Phi^{ \vec t}  z) =  \sum_{\alpha \in kP \cap \Z^m}
\frac{|s_{\alpha}(z)|_{h^{k }}^2}{\|s_{\alpha}\|_{h^{k }}^2}   \;
e^{-i  \langle \vec t,  \alpha\rangle}. \end{equation}
Here and henceforth we use the identification of the base and lifted \szego kernels and torus actions. Literally speaking, the translation of sections on the base requires parallel translation;  but on the open orbit we may think of the sections as scalar functions.

\subsection{Proof of Theorem \ref{CLT}}
We prove Theorem \ref{CLT} by the classical Fourier method, which is based on the `continuity theorem' that weak convergence $D_{\sqrt{k}} \mu_k^z \to 
\gamma_{0, {\Hess}\; \varphi (z)}$ is equivalent to  pointwise convergence of
the Fourier transforms (`characteristic functions in probability language) as long as the pointwise limit is continuous at $0$ (see e.g. \cite[Theorem 9.5.2]{Res}).

It is obvious that 
\begin{equation} \fcal^{-1}_{x \to t} D_{\sqrt{k}} \wt \mu_k^z (\vec t) = B_k^{-1}(z) U_k(\frac{\vec t}{\sqrt{k}}, z, z) e^{i \sqrt{k} \lan \mu(z), t\ran} ,  \end{equation}
or equivalently,
\begin{equation} \label{MAINID} \langle f, D_{\sqrt{k}}\tilde{ \mu}_k^z \rangle = B_k^{-1}(z) \int_{\R^m}   \hat{f}(\vec t)   U_k(\frac{\vec t}{\sqrt{k}}, z, z) e^{i \sqrt{k} \lan \mu(z), t\ran} dt \end{equation}
Thus, the  key point is to study the pointwise scaling asymptotics of \eqref{ucaldef}. 

Let $H_z  = \Hess \varphi(z)= \frac{\partial^2}{\partial \rho_i \rho_j } \varphi |_z$ denote the Hessian
of $\varphi$.

\begin{prop} \label{PTWISE}$ B_k^{-1}(z)U_k(\frac{\vec t}{\sqrt{k}}, z, z) e^{i \sqrt{k} \lan \mu(z), t\ran}  \to \fcal^{-1} {\gamma}_{0, H_z}$ pointwise.   \end{prop}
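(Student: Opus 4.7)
The plan is to apply the off-diagonal parametrix for the Bergman kernel from Proposition \ref{SZKTV} to the quantity $U_k(\vec t/\sqrt{k}, z, z) = B_{h^k}(z, \Phi^{\vec t/\sqrt{k}} z)$ and to Taylor expand the resulting phase. Writing
\[
B_{h^k}(z, \Phi^{\vec t/\sqrt{k}} z) \sim e^{k\,\Psi_k(\vec t, z)}\, A_k(z, \Phi^{\vec t/\sqrt{k}} z),
\qquad
\Psi_k(\vec t,z) := \wt\varphi(z, \Phi^{\vec t/\sqrt{k}} z) - \tfrac{1}{2}\bigl(\varphi(z)+\varphi(\Phi^{\vec t/\sqrt{k}} z)\bigr),
\]
I would first use the $\T$-invariance of $\varphi$, namely $\varphi(\Phi^{\vec t/\sqrt{k}} z) = \varphi(z)$, to reduce this to $\Psi_k(\vec t,z) = \wt\varphi(z, \Phi^{\vec t/\sqrt{k}} z) - \varphi(z)$.

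The key computation is to identify the phase $\wt\varphi(z,\Phi^{\vec t}z)$ in toric coordinates. Since $\varphi(z) = \psi(\rho)$ with $\rho_j = \log|z_j|^2 = \log(z_j\bar z_j)$, polarization shows that the almost-analytic extension is $\wt\varphi(z,w) = \psi(\log(z_1\bar w_1),\dots,\log(z_m\bar w_m))$, which is manifestly holomorphic in $z$ and antiholomorphic in $w$ and restricts to $\varphi$ on the diagonal. For $w = \Phi^{\vec t}z$ with real $\vec t$, we have $z_j\overline{(\Phi^{\vec t}z)_j} = |z_j|^2 e^{-it_j}$, giving the explicit identity
\[
\wt\varphi(z, \Phi^{\vec t} z) \;=\; \psi(\rho - i\vec t).
\]
Taylor expanding $\psi$ around $\rho$ and using $\nabla_\rho \psi(\rho)=\mu_h(z)$ and $\mathrm{Hess}_\rho\,\psi(\rho) = H_z$, one obtains
\[
k\bigl[\psi(\rho - i\vec t/\sqrt{k}) - \psi(\rho)\bigr] \;=\; -i\sqrt{k}\,\langle \mu_h(z),\vec t\rangle \;-\; \tfrac{1}{2}\langle H_z \vec t,\vec t\rangle \;+\; O(k^{-1/2}),
\]
so the linear-in-$\vec t$ term cancels exactly the explicit oscillatory factor $e^{i\sqrt{k}\langle\mu(z),\vec t\rangle}$ in Proposition \ref{PTWISE}.

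It remains to combine this with the amplitude asymptotics. Since $A_k(z,w)\sim k^m(1+O(1/k))$ uniformly on a neighborhood of the diagonal and $\Phi^{\vec t/\sqrt{k}}z\to z$ as $k\to\infty$, one has $A_k(z,\Phi^{\vec t/\sqrt{k}}z) = k^m(1+o(1))$; similarly $B_k(z) = k^m(1+O(1/k))$ by \eqref{TYZ}. Dividing and letting $k\to\infty$ yields
\[
B_k^{-1}(z)\,U_k(\vec t/\sqrt{k},z,z)\,e^{i\sqrt{k}\langle \mu(z),\vec t\rangle} \;\longrightarrow\; e^{-\frac{1}{2}\langle H_z\vec t,\vec t\rangle},
\]
which is precisely $\fcal^{-1}\gamma_{0, H_z}(\vec t)$ under the Fourier sign convention fixed in \eqref{MAINID}.

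The main (modest) obstacle is justifying the identity $\wt\varphi(z,\Phi^{\vec t}z) = \psi(\rho - i\vec t)$ with sufficient uniformity in a shrinking neighborhood of the diagonal, and controlling the $O(k^{-\infty})$ parametrix remainder at the off-diagonal point $\Phi^{\vec t/\sqrt{k}}z$. Both are straightforward: for the first, the polarized formula defines a genuine (not merely almost) holomorphic extension on the open orbit, so the equality is exact there; for the second, the parametrix error is negligible compared to the leading $k^m$ amplitude since $\mathrm{dist}(z,\Phi^{\vec t/\sqrt{k}}z)=O(1/\sqrt{k})$ keeps us well inside the diagonal neighborhood where the parametrix is valid.
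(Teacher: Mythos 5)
Your proof is correct and follows essentially the same route as the paper's: substitute the parametrix of Proposition \ref{SZKTV}, use $\T$-invariance to reduce the phase to the (almost-)analytic continuation of the potential in the $\rho$ variables, Taylor expand at $\rho - i\vec t/\sqrt{k}$ so that the linear term cancels the oscillatory factor $e^{i\sqrt{k}\langle \mu(z),\vec t\rangle}$, and divide by $B_k(z)\sim k^m$. The only cosmetic difference is that you write the polarization $\wt\varphi(z,w)=\psi(\log(z_1\bar w_1),\dots,\log(z_m\bar w_m))$ explicitly (exact only when $\varphi$ is real-analytic, but valid to infinite order at the diagonal in general, which is all the second-order Taylor expansion needs), whereas the paper symmetrizes to $\wt\varphi(\Phi^{-\vec t/2\sqrt{k}}z,\Phi^{\vec t/2\sqrt{k}}z)$ and expands $\Psi(\tau)=\wt\varphi(e^{\tau}z,\overline{e^{\tau}}z)$ along the real axis before continuing to imaginary $\tau$.
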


 \begin{proof} We need to show that, for each $z \in M^o$,
$$B^{-1}_k(z) U_k(\frac{\vec t}{\sqrt{k}}, z,z) e^{z,i \sqrt{k} \langle \vec t, \mu_h(z) \rangle } =   B_k^{-1}(z)B_{k}(\Phi^{ \frac{\vec t}{\sqrt{k}}} z)  e^{i \sqrt{k} \langle \vec t, \mu_h(z) \rangle } 
\to \fcal^{-1} \gamma_{ 0 , H_z} (\vec t). $$

Substituting the Boutet-de-Monvel-Sjoestrand parametrix of Proposition \ref{SZKTV}  
 gives,
\begin{equation} \label{BKFORM} U_k(\frac{\vec t}{\sqrt{k}}, z,z)e^{i \sqrt{k} \langle \vec t, \mu_h(z) \rangle }  \sim  
e^{k (\wt \varphi(z, \Phi^{ \frac{\vec t }{\sqrt{k}}} z) - \varphi(z))} e^{ i \sqrt{k} \langle \vec t, \mu_h(z) \rangle }   A_k \big( z,
\Phi^{ \frac{\vec t }{\sqrt{k}}}z \big),\end{equation} where $\sim $ means that the difference is a
function which decays rapidly in $N$ along with its derivatives.
Such a remainder may be neglected if we only consider expansions
modulo rapidly decaying functions of $N$. 
Using the parametrix, this comes down to the statement that
\begin{equation} \label{BKFORMb}  e^{k (\wt \varphi( z, \Phi^{ \frac{\vec t }{\sqrt{k}}} z) - \varphi(z))}  e^{- i \sqrt{k} \langle \vec t, \mu_h(z) \rangle }   A_k \big( z,
(\Phi^{ \frac{\vec t }{\sqrt{k}}}z)\big)   \to \fcal \gamma_z(\vec t)
\end{equation}

Use the $\T$-invariance of $\varphi$, we get \eqref{EQUI}, i.e.
\[ \wt \varphi(\Phi^{c} z, \Phi^c w) = \wt \varphi(z,w), \]
which implies 
\[ \wt \varphi( z, \Phi^{ \frac{\vec t }{\sqrt{k}}} z) = \wt \varphi(\Phi^{ \frac{-\vec t }{2\sqrt{k}}} z, \Phi^{ \frac{ \vec t }{2\sqrt{k}}}z). \] 
The $\T$ can be extended to a $(\C^*)^m$ action, 
For $\tau \in \C^m$, let $e^{\tau} \in (\C^*)^m$ acts on $z \in M^o$ by multiplication. Then we may define the following funciton 
\[ \Psi(\tau):= \wt \varphi( e^{\tau} z, \wb {e^{\tau}} z),  \]
so that $\wt \varphi(z, \Phi^{ \frac{\vec t }{\sqrt{k}}}  z)  = \Psi(-i t / 2 \sqrt{k})$. 
Since $\wt \varphi(z,w)$ is almost holomorphic in $z$ and anti-holomorphic in $w$ when $z=w$, $\Psi(\tau)$ is almost holomorphic in $\tau$ at $\tau=0$. If $\tau \in i \R^m$, this corresponds to action of $\T$. We restrict to  $\tau = c \in \R^m$,  and then $\wb {e^{c}} z = e^c z$.  Recall that $z=e^{\rho/2+i\theta}$ and $\varphi$ is a function of $\rho$ only, 
\[ \Psi(c) = \varphi(e^c z) = \varphi(\rho) + 2 c \pa_{\rho} \varphi(\rho) + \half \lan \Hess_\rho\varphi(\rho) 2c, 2c \ran+ O(c^3). \]
Finally, using $\Psi$ is analytic at $c=\vec 0$, we have 
\[ \Psi(-i t / 2 \sqrt{k}) = \varphi(\rho) - (it/\sqrt{k}) \pa_{\rho} \varphi(\rho) - \half k^{-1}  \lan H_z \vec t, \vec t \ran+ O(k^{-3/2})\]
Also note that $\mu_h(z) = \pa_\rho(\varphi(\rho)$ from \eqref{symp-pot}, we then have 
\[ k \wt \varphi(z, \Phi^{ \frac{\vec t }{\sqrt{k}}}  z) -k  \varphi(z)) + i \sqrt{k} \langle \vec t, \mu_h(z) \rangle = - \half\lan H_z \vec t, \vec t \ran+ O(k^{-1/2}) \]

The amplitude $\tilde{A}_N$   has an expansion of the form,
$$\tilde{A}_k\big( z,
 e^{i \frac{\xi}{k}} z , 0, N \big) = k^m  + k^{m-1} a_1 + O(k^{m-1}),$$
 for various smooth coefficients $a_j(z)$; the first one is a universal
 constant. We conclude that
\begin{equation} k^{-m} U_k(\frac{\vec t}{\sqrt{k}}, x, x)   e^{ i \sqrt{k} \langle \vec t, \mu_h(z) \rangle }  \to  e^{- \half\lan H_z \vec t, \vec t \ran }. \end{equation}

 \end{proof}

The proof of Proposition \ref{PTWISE} actually shows that there is a   pointwise expansion asymptotic expansion to all orders, with remainders of
polynomial growth in $\vec t$. For this it suffices to carry out the Taylor expansions of the phase and amplitudes to higher order. We can then integrate the result against suitable test functions to obtain the   following result,  analogous to  \cite[Proposition 8.1]{ZZ16}:

\begin{prop}\label{UCALASYM} Let $z \in M_0$.  For $f \in \scal(\R^m)$
with $\hat{f} \in C_0^{\infty}(\R^m)$, 
$$\int_{\R^m}   \hat{f}(\vec t) B_k^{-1}(z) U_k(\frac{\vec t}{\sqrt{k}}, z, z) e^{}dt=   \int_{\R^m}   \hat{f}(\vec t) e^{-\half \langle H_z \vec t, \vec t  \rangle} d t + O(k^{-\half}),$$
 where $H_{z} = \Hess \;\varphi({\rho})$ is the Hessian of the toric \kahler potential.
 In fact, there exists a complete asymptotic expansion of the integral in powers of $k^{-\half}$, and the asymptotics are uniform on compact subsets of $M^o$. 

\end{prop}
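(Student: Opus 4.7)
The plan is to upgrade the pointwise convergence established in Proposition \ref{PTWISE} to an integrated statement with a quantitative remainder, by carrying the Taylor expansions carried out there to higher order and exploiting the compact support of $\hat f$. I would begin by writing, as in \eqref{BKFORM},
\begin{equation*}
B_k^{-1}(z)\,U_k\!\left(\tfrac{\vec t}{\sqrt k},z,z\right) e^{i\sqrt k\langle \vec t,\mu_h(z)\rangle}
= B_k^{-1}(z)\,e^{k(\wt\varphi(z,\Phi^{\vec t/\sqrt k}z)-\varphi(z))+i\sqrt k\langle \vec t,\mu_h(z)\rangle}\,A_k\!\left(z,\Phi^{\vec t/\sqrt k}z\right) + O(k^{-\infty}),
\end{equation*}
valid uniformly for $\vec t$ in any compact set (which suffices since $\supp\hat f$ is compact, and for $k$ large enough $\Phi^{\vec t/\sqrt k}z$ remains in the near-diagonal neighborhood where the Boutet de Monvel--Sjöstrand parametrix of Proposition \ref{SZKTV} applies).

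Next, I would recycle the key computation from Proposition \ref{PTWISE}: using the $\T$-equivariance \eqref{EQUI} together with the almost-holomorphic function $\Psi(\tau)=\wt\varphi(e^\tau z,\overline{e^\tau}z)$, and Taylor expanding $\Psi$ around $0$ to arbitrary order in its (complexified) argument $\tau=-i\vec t/(2\sqrt k)$. The degree-$0$, $1$, and $2$ terms combine with $i\sqrt k\langle \vec t,\mu_h(z)\rangle$ and $-k\varphi(z)$ to give exactly $-\tfrac{1}{2}\langle H_z\vec t,\vec t\rangle$, while the degree-$j$ term ($j\ge 3$) contributes $k^{1-j/2}\,q_j(\vec t)$ for a homogeneous polynomial $q_j$. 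Exponentiating yields
\begin{equation*}
e^{k(\wt\varphi(z,\Phi^{\vec t/\sqrt k}z)-\varphi(z))+i\sqrt k\langle \vec t,\mu_h(z)\rangle}
= e^{-\frac12\langle H_z\vec t,\vec t\rangle}\Bigl(1+\sum_{j\ge 1} k^{-j/2}Q_j(\vec t)\Bigr),
\end{equation*}
with $Q_j$ polynomials in $\vec t$. Combining with the standard amplitude expansion $A_k(z,w)=k^m(1+k^{-1}a_1(z,w)+\cdots)$ evaluated at $w=\Phi^{\vec t/\sqrt k}z$ (itself Taylor expanded in $\vec t/\sqrt k$), and the \eqref{TYZ} expansion $B_k^{-1}(z)=k^{-m}(1+O(k^{-1}))$, gives a complete pointwise asymptotic expansion
\begin{equation*}
B_k^{-1}(z)\,U_k\!\left(\tfrac{\vec t}{\sqrt k},z,z\right) e^{i\sqrt k\langle \vec t,\mu_h(z)\rangle}
\sim e^{-\frac12\langle H_z\vec t,\vec t\rangle}\sum_{j\ge 0} k^{-j/2}R_j(\vec t;z),
\end{equation*}
with $R_0\equiv 1$ and each $R_j$ polynomial in $\vec t$.

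The final step is to pair this expansion with $\hat f$ and integrate. Since $\hat f\in C_c^\infty(\R^m)$, each integral $\int \hat f(\vec t)\,e^{-\frac12\langle H_z\vec t,\vec t\rangle}R_j(\vec t;z)\,d\vec t$ is absolutely convergent, and the truncation error from stopping the expansion after $N$ terms is bounded by $k^{-(N+1)/2}$ times a quantity of the form $\int_{\supp\hat f}|\hat f(\vec t)|\,P_N(\vec t)\,d\vec t<\infty$. The $j=0$ term recovers $\int \hat f\,d\gamma_{0,H_z}$ (using $\fcal^{-1}\gamma_{0,H_z}(\vec t)=e^{-\frac12\langle H_z\vec t,\vec t\rangle}$ up to constants, as in \eqref{MAINID}), proving the leading-order statement with a $O(k^{-1/2})$ remainder and producing the full expansion in $k^{-1/2}$.

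The main obstacle I anticipate is bookkeeping rather than analysis: making sure that when the (almost) holomorphic expansion of $\Psi$ is composed with the amplitude expansion of $A_k$ and the expansion of $B_k^{-1}(z)$, the parity of powers of $k^{-1/2}$ and the dependence of the polynomials $Q_j$, $R_j$ on $z$ are tracked cleanly enough to guarantee uniformity on compact subsets of $M^o$. Uniformity in $z$ follows because all Taylor coefficients involve derivatives of $\varphi$, $\wt\varphi$, and $a_j$, which are smooth on $M^o$; uniformity in $\vec t$ on $\supp\hat f$ is automatic from compactness, which is why the compact-support hypothesis on $\hat f$ (rather than mere integrability) is essential here.
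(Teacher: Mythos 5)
Your proposal is correct and follows essentially the same route as the paper: substitute the Boutet de Monvel--Sj\"ostrand parametrix, use the $\T$-equivariance to reduce the phase to the almost-holomorphic function $\Psi(\tau)$, Taylor expand about $\tau=0$ so that the quadratic term yields $-\tfrac12\langle H_z\vec t,\vec t\rangle$ and the higher terms contribute powers of $k^{-1/2}$, and then integrate against the compactly supported $\hat f$. The paper states this tersely and defers the detailed phase/amplitude bookkeeping to the proof of Proposition \ref{INTERFACEf}; you simply carry out that bookkeeping directly, which is legitimate since compact support of $\hat f$ removes the need for the periodization and localization arguments used there.
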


\begin{proof}

For $f \in \scal(\R^m)$ with $\hat{f} \in C_0^{\infty}(\R^m)$,  \begin{equation}\label{IDS}\begin{array}{lll} \langle f, D_{\sqrt{k}}\tilde{ \mu}_k^z \rangle
& = &  \frac{1}{\Pi_{h^{k }}(z,z)}\;\; \sum_{\alpha \in kP \cap \Z^m}
\frac{|s_{\alpha}(z)|_{h^{k }}^2}{\|s_{\alpha}\|_{h^{k }}^2}   \;
f(\sqrt{k} (\frac{\alpha}{k}- \mu_h(z)) \\ && \\ & = &
B_k^{-1}(z)   \int_{\R^m} \hat{f}(\vec t) U_k(\frac{\vec t}{\sqrt{k}}, z, z) e^{ i \sqrt{k} \langle \vec t, \mu_h(z) \rangle } dt \\ &&\\
& = & B_k^{-1}(z) \int_{\R^m}    \hat{f}(\vec t)  B_{h^k}(z,\Phi^{ \frac{\vec t}{\sqrt{k}}}    z)e^{ i \sqrt{k} \langle \vec t, \mu_h(z) \rangle } dt 
\end{array}\end{equation} 
Since the integrand is compactly supported, we may  apply the pointwise limit  of Proposition \ref{PTWISE} to obtain the principal term. 
By  Taylor expanding the
factor $e^{ k^{-\half}
R_3(k, z)} $ one obtains an oscillatory integral with the same
phase and a remainder of order $k^{-\half}$.

\end{proof}

Further details will be given in the proof of Proposition  \ref{INTERFACEf}.
  
\subsection{Completion of the proof of Theorem \ref{CLT}}

Since $\scal(\R^m)$ is dense is $C_0(\R^m)$ (continuous functions vanishing at infinity, equipped with
the sup norm),  Proposition \ref{UCALASYM} implies  weak* convergence  of $D_{\sqrt{k}} \wt \mu_k^z \to \gamma_{0, {\Hess}\; \varphi (z)}$ on $C_0(\R^m)$ (continuous functions vanishing at infinity).
For weak* convergence on $C_b(\R^m)$ one needs tightness of the sequence $\mu_k^z$. The so-called Levy continuity theorem on $\R^m$ says that if $\mu_k$ is a sequence of probability measures on $\R^m$ and
$\hat{\mu}_k (t) \to \varphi(t)$ pointwise and $\varphi(t)$ is continuous at $0$,
then $\varphi(t) = \hat{\mu}(t)$ for some probability measure $\mu$ and $\mu_k \stackrel{w*}{\rightarrow}  \mu$ on $C_b$.   The continuity of $\varphi$ at $t =0$ implies tightness of the sequence $\mu_k$.  We refer to \cite[Theorem 9.5.2]{Res}  for background. 
Hence, $D_{\sqrt{k}} \mu_k^z \stackrel{w*}{\rightarrow}  \gamma_{0, {\Hess}\; \varphi (z)}$ in the sense of  weak* convergence on $C_b(\R^n)$ as long as 
 $\fcal_{x \to t} D_{\sqrt{k}} \mu_k^z  \to \fcal \gamma_{0, {\Hess}\; \varphi (z)}$ pointwise. 
 Proposition \ref{PTWISE} thus implies that the sequence is tight,
and and further implies weak* convergence
on $C_b(\R)$. Hence Theorem \ref{CLT} is proved.

It follows from Theorem \ref{CLT} and the Portmanteau theorem that
  $$\lim_{k \to \infty} D_{\sqrt{k}} \wt \mu_k^z (K)  =   \gamma_{0, {\Hess} \varphi (z)}(K), $$
  for any  convex subset of $\R^m$, or more generally for any `continuity
  set' such that $\gamma_{0, {\Hess}\; \varphi (z)}(\partial K) = 0$.

\section{Berry-Esseen remainder  estimate}The purpose of
  this section is to improve the limit formula of Theorem \ref{CLT} and the 
  expansion in Proposition \ref{UCALASYM}  by giving the remainder
  estimate of Theorem \ref{BE}. We aim to give a representative result rather than the most general one possible, and therefore restrict to a reasonably general class of continuous functions rather than indicator functions. 

For any bounded continuous function $f\in C_b(\R)$, we define a slight simplification of  \eqref{IDS},
\begin{equation} \label{Ifk} I_{k,f}(z):= k^{-m}  \sum_{\alpha \in k P \cap \Z^m}
\frac{|s_{\alpha}(z)|_{h^{k }}^2}{\|s_{\alpha}\|_{h^{k }}^2}   \;
f(\sqrt{k} (\frac{\alpha}{k}- \mu_h(z)).   \end{equation}
Since $\Pi_{h^k}(z,z) = k^m(1+O(k^{-1/2}))$, we have $$ \langle f, D_{\sqrt{k}}\tilde{ \mu}_k^z \rangle =  I_{k,f}(z) (1 + O(k^{-\half})), $$
and it suffices to prove the desired bound for  \eqref{Ifk}.

We now prove the Berry-Esseen remainder bound for integrals of
$\mu_k^z$ against certain types of $f \in C_0(\R)$ (functions vanishing
as $|x| \to \infty$).

\bp  \label{INTERFACEf}  Let  $f \in C_0$ have the properties that
$\hat{f} \in L^1$ and that $|\hat{f}(\vec t)| \leq  C g(|\vec t|)$ where $g(|\vec t|) \in L^1$ and is monotonically decreasing as a function of $|\vec t|$. Then,
\[ I_{k,f}(z) = \int_{\R} f(x) d \gamma_{0, {\Hess} \; \varphi (z)}(x)+O_f(\;k^{-1/2}).\]

\ep

\bpf 
We start again from the last formula of \eqref{IDS}.
 We note that $\vec t \to
\hat{\Pi}_{h_k}(\Phi^{ \frac{\vec t}{\sqrt{k}}}  x, r_{\theta} x)) $ is periodic with respect to the lattice  $2 \pi \sqrt{k} \Z^m$ (similarly
for the parametrix and remainder terms),  so the integrals converge
when $\hat{f} \in \scal(\R)$. We periodize $g(\vec t) =\hat{f}(\vec t) e^{- i \sqrt{k} \langle \vec t, \mu_h(z) \rangle } $ with respect to the lattice $2 \pi \sqrt{k} \Z^m $  by means
of the $\sqrt{k}$-periodization operator
\[ \pcal_{\sqrt{k}} g(\vec t) := \sum_{\ell \in \Z^m} g(\vec t+2\pi \sqrt{k} \ell), \;\; g \in \scal(\R^m). \]  The sum  converges
as long as $|g(\vec t) | \in L^1(\R^m)$  is bounded by a decreasing positive
$L^1$ function.  Hence, as long as $\hat{f}$ has this property,
$$ \pcal_{\sqrt{k}}(\hat{f} e^{- i \sqrt{k} \langle \vec t, \mu_h(z) \rangle }  ) 
= \sum_{\ell \in \Z^m} \hat f(\vec t+2\pi \sqrt{k} \ell) e^{- i \sqrt{k} \langle \vec t, \mu_h(z) \rangle }  e^{- 2 \pi i k \langle \ell, \mu_h(z) \rangle} =: e^{- i  \sqrt{k} \langle \vec t, \mu_h(z) \rangle}  \hat{F}_k(\vec t), $$
 with $\hat F_k(\vec t) =  \sum_{\ell \in \Z^m} \hat f(\vec t+2\pi \sqrt{k} \ell) e^{-2\pi i (k \langle \ell, \mu_h(z) \rangle)}.$
Then,
\bea \label{Izk2}
I_{k,f}(z) & = & k^{-m} \int_{\sqrt{k} [-\pi, \pi]^m} \hat F_k(\vec t)   e^{- i  \sqrt{k} \langle \vec t, \mu_h(z) \rangle} 
B_{h^k}(\Phi^{ \frac{\vec t}{\sqrt{k}}}  z, z) d \vec t.
 \eea

We then localize the last  integral using a smooth cutoff $\chi(\frac{\vec t }{(\log k)^2} )$, where $\chi \in C_0^{\infty}(\R^m)$  is supported in $(-1,1)^m$ and equals to $1$ in $(-1/2, 1/2)^m$.  When $\pi \sqrt{k} \geq |\vec t| \geq  (\log k)^2$, the
off-diagonal Bergman kernel $\Pi_{h^k}(\Phi^{ \frac{\vec t}{\sqrt{k}}}  z, r_{\theta} z))$ is rapidly decaying   at the rate
$O(e^{- (\log k)^2})$. Here, we use the standard off-diagonal estimate,  $|\Pi_{h^k}(z,w)|
\leq C k^{m} e^{- \beta \sqrt{k} d(z,w)}$ for certain $\beta, C > 0$ (see \cite{ZZ17b} for background). Hence,
\bea \label{Izk3}
I_{k,f}(z) & = & k^{-m} \int_{\R^m} \chi(\frac{\vec t }{(\log k)^2} )\; \hat F_k(\vec t)  e^{- i  \sqrt{k} \langle \vec t, \mu_h(z) \rangle} 
B_{h_k}(\Phi^{ \frac{\vec t}{\sqrt{k}}}  z,z)) d \vec t + O_f(k^{-\infty}), \eea
where the constant in $O_f(k^{-\infty})$  depends on $\|\hat{F}_k\|_{L^1(-\sqrt{k}, \sqrt{k})^m} =  \|\hat{f}\|_{L^1}$.

We then introduce the Boutet-de-Monvel-\Sjostrand parametrix \eqref{SZKTV} to get, 
 \bea \label{Izkb}
I_{k,f}(z) 
&=&   \int_{-\infty}^\infty  \chi(\frac{t }{(\log k)^2} )\; \hat F_k(t)     e^{- i \sqrt{k} \langle \vec t, \mu_h(z) \rangle }  e^{k \wt \varphi(e^{i t/\sqrt{k}}  z,  z) - k \varphi(z)} A_k(e^{i t/ \sqrt{k}} z,z)  dt  
 \\
 &+&  \int_{-\infty}^\infty \chi(\frac{t }{(\log k)^2} )\; \hat F_k(t)     e^{- i \sqrt{k} \langle \vec t, \mu_h(z) \rangle } R_k(e^{i t/ \sqrt{k}} z,z)  dt + O_f(k^{-\infty}).
\eea
By the parametrix construction,   $R_k \in k^{-\infty} C^{\infty}(M \times M)$, hence  the second term is
$O(k^{-\infty})$ and may be absorbed into the remainder estimate.

As in the proof of Proposition \ref{PTWISE}, the  phase function of  $I_{k,f}$ has the Taylor expansion (or asymptotic expansion),
\begin{equation} \label{PSIDEF2} \begin{array}{lll} \Psi(it,  z) & = &  -i \sqrt{k}\langle \vec t, \mu_h(z)\rangle + k \wt \varphi(e^{i t/\sqrt{k}}  z,  z) - k \varphi(z) \\ && \\ 
&=&   -\half \langle H_z \vec t, \vec t \rangle +  g_1(it, z),  
\end{array} \end{equation}
where
\be  g_1 =  O(k^{-1/2}  |t|^3). \label{g3g4}\ee

We substitute the Taylor expansion into the phase of  the first term of $I_{k, f}(z)$,
and also Taylor expand $e^{ g_1}$ to order $1$.  Let $e_1(x) = 1 - e^x$. Since $|\vec t| \leq (\log k)^2$  on the support of the integrand, $|g_1| \leq  C (\frac{(\log k)^6}{\sqrt{k}})$ on $|\vec t| \leq (\log k)^2$.  Since $e^x = 1 + e_1(x)$ where $e_1(x) \leq 2 x$ on $[0,  C (\frac{\log k)^6}{\sqrt{k}})]$,  $e^{g_1} = 1 + \tilde{g}_1$ where $\tilde{g}_1(k,t) \leq 2 g_1 \leq C_0 k^{-\half} (1 + |\vec t|^3)$ on $ [0, (\log k)^2]$.   

We get
\bea  I_{k,f}( z) &
= &   \int_{\R^m} \chi(\frac{\vec t}{ (\log k)^2})\hat F_k(\vec t) e^{ -\half \langle H_z \vec t, \vec t \rangle}  (1 + \tilde{g}_1)) dt + O_f(k^{-1/2}) \\&& \\
& = & \int_{\R^m} \chi(\frac{\vec t}{(\log k)^2})\hat F_k(\vec t) e^{  -\half \langle H_z \vec t, \vec t \rangle}  dt + O_f(k^{-1/2})
\eea
where $ \chi(\frac{t}{ (\log k)^2}) |\wt g_1| \leq C_0 k^{-1/2} (1+ |\vec t|^3)$ after integration against the Gaussian factor is of size $O(k^{-1/2})$.

Finally, we unravel the periodization $\hat{F}_k$ to  evaluate the first term.
\bea
&&\int_{\R^m} \chi(\frac{t}{(\log k)^2})\hat F_k(t) e^{-\half \lan H_z \vec t, \vec t \ran}  dt \\
&=& \int_{\R^m} \chi(\frac{t}{(\log k)^2})\hat f(\vec t) e^{-\half \lan H_z \vec t, \vec t \ran}  dt   \\
& + &\sum_{\ell \in \Z^m \RM 0} \int_{\R} \chi(\frac{\vec t}{(\log k)^2})\hat f(\vec t + 2 \pi \sqrt{k} \ell) e^{2\pi i k \langle \ell, \mu_h(z) \rangle -\half \lan H_z \vec t, \vec t \ran}  dt \\
&=& \int_{\R^m} \chi(\frac{\vec t}{(\log k)^2})\hat f(\vec t)  e^{-\half \lan H_z \vec t, \vec t \ran}  dt  + O(k^{-\half}\|\hat{f}\|).
\eea
In the $\ell \not= 0$ sum, we use that
$$\begin{array}{lll} \sum_{\ell \in \Z^m \RM 0} |\hat{f}(\vec t + 2 \pi \sqrt{k} \ell)|
& \leq & \sum_{\ell \in \Z^m \RM 0} g(| \vec t + 2 \pi \sqrt{k} \ell| )  \\&& \\
&\leq & C \int_{|\ell | \geq 1} g(| \vec t + 2 \pi \sqrt{k} \ell| )   d \ell \\ &&\\
& = & \frac{C}{\sqrt{k}} \int_{|y | \geq \sqrt{k}} g(| \vec t + 2 \pi y| )  d y \\ &&\\
&= &  C k^{-\half} \| g \|_{L^1}  , \end{array}$$
so that 
$$\sum_{\ell \in \Z^m \RM 0} \int_{\R} \chi(\frac{\vec t}{(\log k)^2})\hat f(\vec t + 2 \pi \sqrt{k} \ell) e^{2\pi i k \langle \ell, \mu_h(z) \rangle -\half \lan H_z \vec t, \vec t \ran} dt $$
is bounded by
$$[C' k^{-\half}\| g \|_{L^1} ]
\int_{\R} \chi(\frac{\vec t}{(\log k)^2})e^{-\half \lan H_z \vec t, \vec t \ran}   dt =  O(k^{-\half}\| g \|_{L^1} ).$$

Finally, removing the cut-off $\chi(t/(\log k)^2)$ from the $\ell = 0$ term introduces an error of order  $\int_{(\log k)^2}^\infty e^{-a x^2} dx = O(k^{-\infty})$. We have
\bea  I_{k,f}(z) &=& \int_{\R^m} \hat f(t) e^{ -\half \langle H_z  \vec t , \vec  t \rangle}  dt  + O_f(k^{-1/2}) \\
&=& \frac {1 }{(2\pi)^{m/2} \sqrt{ \det (H_z)}} \int_{\R^m} f(x) e^{-\frac{1}{2} \langle H_z^{-1} x, x \rangle}  dx +O_f(k^{-1/2})
\eea
by the Plancherel theorem.  This completes the proof of Theorem \ref{BE}.
\epf

\begin{rem} The result can be  generalized to indicator functions ${\bf 1}_K$ of convex sets $K$. It  would suffice to smoothe  ${\bf 1}_K$ and to measure
the error in the smoothing. The terms contributing to the latter are sums of 
\eqref{PHK} over lattice points close to $\partial K$. The size of the remainder thus depends on the position of $\mu_h(z)$ relative to $K$. 
\end{rem}

\section{\label{LLSECT}Local limit law: Proof of Theorem \ref{LLT}}

To prove Theorem \ref{LLT} we need to review some further background
on toric \kahler manifolds. 

Let $h = e^{-\varphi}$ be a toric Hermitian metric on $L$. Recall that 
the {\it symplectic potential} $u_{\varphi}$ associated to $\varphi$
 is its Legendre transform: for $x \in P$ there
is a unique $\rho(x)$ such that $\mu_{\varphi}(e^{\rho(x)/2}) =d\varphi (\rho(x))= x$. If $z = e^{\rho/2 + i \theta}$ then we write
$\rho_z = \rho = \log |z|^2$. Then the Legendre transform is defined to
be the convex function
\begin{equation} \label{SYMPOTDEF} u_{\varphi}(x) = \langle x,  \rho(x) \rangle -
\varphi(\rho(x)).
\end{equation} 
Also define
  \begin{equation} \label{Ikzdef} I^z(x) = u_{\varphi} (x) -
\langle x, \rho_z \rangle + \varphi (\rho_z).\end{equation} 
Then $I^z(x)$ is a convex function on $P$ with a minimum of value $0$ at $x = \mu_h(z)$ and with Hessian that of $u_{\varphi}$.

The weights $ \pcal_{h^k}(\alpha, z)$ \eqref{PHK} of the dilate
$\mu_k^{z,1}$ admit pointwise asymptotic expansions. 

\begin{lem} \label{pcalLem} $\pcal_{h^k}(\alpha, z) = k^{m/2} (2\pi)^{-m/2}|\det  {\Hess}(u_{\varphi}(\mu_h(z))|^{\half} e^{- k I^z(\frac{\alpha}{k})} (1+O(1/k)), $
where $O(1/k)$ is uniform in $z, \alpha$. 
\end{lem}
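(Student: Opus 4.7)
The plan is to evaluate the norm-squared integral $Q_{h^k}(\alpha)$ of \eqref{QFORM} by the Laplace (steepest descent) method and then divide the explicit numerator of \eqref{PHK} by the result. First I would pass to log-orbit coordinates $z=e^{\rho/2+i\theta}$: the integrand of $Q_{h^k}(\alpha)$ is $\T$-invariant, so the angular integration contributes an overall $(2\pi)^m$, and the \kahler volume form becomes $\det(\Hess_\rho\varphi(\rho))\,d\rho$ up to a combinatorial constant, giving
\[ Q_{h^k}(\alpha) = C_m \int_{\R^m} e^{k[\langle \alpha/k,\rho\rangle - \varphi(\rho)]}\, \det(\Hess_\rho\varphi(\rho))\, d\rho. \]

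Next I would apply stationary phase to the strictly concave phase $\Phi(\rho):=\langle \alpha/k,\rho\rangle-\varphi(\rho)$. The critical-point equation $\nabla_\rho\varphi=\alpha/k$ is precisely the moment-map condition, so the unique critical point is $\rho^\star=\rho(\alpha/k)$, the argument appearing in the Legendre transform \eqref{SYMPOTDEF}, with critical value $u_\varphi(\alpha/k)$ and Hessian of $-\Phi$ equal to the positive definite $\Hess_\rho\varphi(\rho^\star)$. Laplace's method then yields
\[ Q_{h^k}(\alpha) = C_m\Bigl(\frac{2\pi}{k}\Bigr)^{m/2} \det(\Hess_\rho\varphi(\rho^\star))^{1/2}\, e^{k u_\varphi(\alpha/k)}\, (1+O(1/k)), \]
where the $\tfrac{1}{2}$-power is the product of the amplitude $\det(\Hess_\rho\varphi(\rho^\star))$ with the usual Laplace factor $\det(\Hess_\rho\varphi(\rho^\star))^{-1/2}$. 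Using Legendre duality $\Hess u_\varphi(x)=[\Hess_\rho\varphi(\rho(x))]^{-1}$ and dividing $|z^\alpha|^2 e^{-k\varphi(z)}=e^{k[\langle \alpha/k,\rho_z\rangle-\varphi(\rho_z)]}$ by this expression produces the claimed formula, initially with $\Hess u_\varphi$ evaluated at $\alpha/k$. Since $I^z$ is strictly convex with unique minimum $I^z(\mu_h(z))=0$, the factor $e^{-k I^z(\alpha/k)}$ suppresses all lattice points with $|\alpha/k-\mu_h(z)|\gg k^{-1/2}$; on the complementary Gaussian window $\Hess u_\varphi(\alpha/k)=\Hess u_\varphi(\mu_h(z))+O(k^{-1/2})$, a discrepancy that is absorbed into the $(1+O(1/k))$ after combining with the quadratic expansion of $I^z$.

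The main obstacle is uniformity of the Laplace expansion in $\alpha$: as $\alpha/k$ approaches $\partial P$, the critical point $\rho^\star$ escapes to infinity and the compactly supported stationary-phase argument degenerates. This is precisely the boundary regime analyzed in \cite{SoZ10,SoZ12}, where off-diagonal decay of the Bergman kernel is combined with the exponential suppression of $e^{-kI^z}$ to localize the integral to a compact set in $\rho$-space on which Laplace's method applies with uniform $O(1/k)$ remainder. As the sentence preceding the lemma suggests, I would invoke those estimates rather than reprove them.
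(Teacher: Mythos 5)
Your proposal is correct and follows essentially the same route as the paper: the paper likewise reduces the lemma to the steepest-descent asymptotics of $Q_{h^k}(\alpha)$ (quoting \cite[Proposition 3.1]{SoZ07}, where the integral is written over $P$ in symplectic coordinates rather than carried out directly in $\rho$-coordinates as you do) and then divides $e^{\langle\alpha,\rho_z\rangle-k\varphi(\rho_z)}$ by the result to identify the exponent with $-kI^z(\alpha/k)$. Your extra care about where the Hessian is evaluated ($\alpha/k$ versus $\mu_h(z)$) and about uniformity near $\partial P$ addresses points the paper glosses over, though note that on the Gaussian window the substitution $\Hess u_\varphi(\alpha/k)\to\Hess u_\varphi(\mu_h(z))$ costs $O(k^{-1/2})$ rather than $O(1/k)$ in general; it is only $O(1/k)$ in the regime $\alpha/k=\mu_h(z)+O(1/k)$ where the lemma is actually applied.
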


\bpf
For the sake of completeness, we briefly review the elements of the proof. 
In \cite{SoZ07,SoZ10}, the norming constants \eqref{QFORM} were 
evaluated in terms of the symplectic potential:
\begin{equation} \label{SPNORM} \QQ_{h^k}(\alpha) = \int_P e^{ k
(u_{\varphi}(x) + \langle \frac{\alpha}{k} - x, \nabla u_{\varphi}(x) \rangle}
d\Vol(x). \end{equation} By applying a steepest descent method, it was shown in    \cite[Proposition 3.1]{SoZ07} 
that for interior $\alpha \in k P$, 
\begin{equation} \label{QQ} \QQ_{h^k}(\alpha) = k^{-m/2} \frac{(2\pi)^{m/2}}{|\det \text{Hess} \; u_{\varphi}|^{\half} }   e^{ k
u_{\varphi} (\alpha/k)} (1+O(1/k)). \end{equation} 
Hence, in the coordinates $z = e^{\rho/2 + i\theta}$, 
$$\pcal_{h^k}(\alpha, z) = \frac{e^{\langle \alpha, \rho_z \rangle} e^{- k \varphi(z)}}{Q_{h^k}(\alpha)} \sim  k^{m/2} (2\pi)^{-m/2} |\det \text{Hess} \; u_{\varphi}|^{\half}e^{ -k
u_{\varphi}
(\alpha)}  e^{\langle \alpha, \rho_z \rangle} e^{- k \varphi(z)}, $$
as stated in the Lemma.
\epf

We now assume that $\frac{\alpha}{k} = \mu_h(z) + O(1/k)$ and have
$$I^z(\frac{\alpha}{k}) \simeq I^z(\mu_h(z)) + \nabla_x I^z(\mu_h(z)) \cdot (\frac{\alpha}{k} - \mu_h(z)) + 
\langle  \text{Hess} I^z (\mu_h(z)) (\frac{\alpha}{k} - \mu_h(z)), \frac{\alpha}{k} - \mu_h(z) \rangle + O(k^{-3}). $$

As mentioned above,
$$I^z(\mu_h(z)) = 0, \; \nabla_x I^z |_{x = \mu_h(z)} = 0,
\;\;\text{Hess}\; I_z (\mu_h(z)) = \Hess \;u_\varphi(\mu_h(z)) = [\text{Hess} \;\varphi (\rho_z)]^{-1} = H_z^{-1}. $$
By Lemma \ref{pcalLem}, and by normalizing the weight, 
\begin{equation}k^{-m} \pcal_{h^k}(\alpha, z) = k^{-m/2} |\det H_z|^{-\half} e^{- k \langle  H_z^{-1} (\frac{\alpha}{k} - \mu_h(z)), \frac{\alpha}{k} - \mu_h(z) \rangle} (1+O(1/k)), \end{equation}
where $O(1/k)$ is uniform in $z, \alpha$. Distributing the $k$ in the exponent as $\sqrt{k}$ in each argument of the bilinear form completes the proof.

\end{document}